\newcommand{\beqa}{\begin{eqnarray*}}
\newcommand{\eeqa}{\end{eqnarray*}}
\newcommand{\beqn}{\begin{eqnarray}}
\newcommand{\eeqn}{\end{eqnarray}}
\newcommand{\iy}{\infty}
\newcommand{\lt}{\left}
\newcommand{\rt}{\right}
\newcommand{\C}{\mathbb C}
\newcommand{\R}{\mathbb R}
\newcommand{\N}{\mathbb N}
\newcommand{\mcH}{\mathcal H}
\newcommand{\mcB}{\mathcal B}
\newcommand{\mcC}{\mathcal C}
\newcommand{\tf}{\tfrac}
\newcommand{\al}{\alpha}
\newcommand{\e}{\varepsilon}
\newcommand{\de}{\delta}
\newcommand{\De}{\Delta}
\newcommand{\Om}{\Omega}
\newcounter{cnt1}
\newcounter{cnt2}
\newcounter{cnt3}
\newcommand{\blr}{\begin{list}{$($\roman{cnt1}$)$}
 {\usecounter{cnt1} \setlength{\topsep}{0pt}
 \setlength{\itemsep}{0pt}}}
\newcommand{\bla}{\begin{list}{$($\alph{cnt2}$)$}
 {\usecounter{cnt2} \setlength{\topsep}{0pt}
 \setlength{\itemsep}{0pt}}}
\newcommand{\bln}{\begin{list}{$($\arabic{cnt3}$)$}
 {\usecounter{cnt3} \setlength{\topsep}{0pt}
 \setlength{\itemsep}{0pt}}}
\newcommand{\el}{\end{list}}
\newtheorem{thm}{Theorem}[section]
\newtheorem{ex}[thm]{Example}
\newtheorem{Def}[thm]{Definition}
\newtheorem{rem}[thm]{Remark}
\newcommand{\Rem}{\begin{rem} \rm}
\newcommand{\bdfn}{\begin{Def} \rm}
\newcommand{\edfn}{\end{Def}}
\newcommand{\ba}{\begin{array}}
\newcommand{\ea}{\end{array}}
\numberwithin{equation}{section}
\date{}
\begin{document}
\title{\bf{The S-basis and M-basis Problems for Separable Banach Spaces}}
\author[Gill]{Tepper L. Gill}
\address[Tepper L. Gill]{ Departments of Electrical \& Computer Engineering and Mathematics Howard University\\
Washington DC 20059 \\ USA, {\it E-mail~:} {\tt tgill@howard.edu}}
\date{}
\subjclass{Primary (46B03), (46B20) Secondary(46B25)}
\keywords{Marcinkiewicz basis, Schauder basis, biorthogonal system,  duality mappings,  Banach spaces}
\begin{abstract}   This note has two objectives.  The first objective is show that, even if a separable Banach space does not have a Schauder basis (S-basis), there always exists Hilbert spaces $\mcH_1$ and $\mcH_2$, such that $\mcH_1$  is a continuous dense embedding in $\mcB$ and $\mcB$ is a continuous dense embedding in $\mcH_2$.  This is the best possible improvement of a theorem due to Mazur (see \cite{BA} and also \cite{PE1}).  The second objective is show how $\mcH_2$  allows us to provide a positive answer to the Marcinkiewicz-basis (M-basis) problem.   
\end{abstract}
\maketitle
\section{Introduction}
\begin{Def}
Let $\mcB$ separable Banach space, with dual space $\mcB^*$.  A sequence $(x_n) \in \mcB$ is called a S-basis for $\mathcal{B}$ if $\lt\|x_n\rt\|_{\mcB}=1$ and, for each $x \in \mathcal{B}$, there is a unique sequence $(a_n)$ of scalars such that 
\[
x={\rm{lim}}_{n\rightarrow \infty}\sum\limits_{k = 1}^n {a_k x_k } =\sum\limits_{k = 1}^\iy {a_k x_k }. 
\] 
\end{Def}
\begin{Def}Let ${\left\langle { \{x_i : i \in \mathbb{N} \} }\right\rangle}$ be the set of all linear combinations of the family of vectors $\{x_i \}$ (linear span).  The family $\left\{ {\left( {{x_i},x_i^*} \right)} \right\}_{i = 1}^\infty  \subset \mcB \times {\mcB^*}$ is called:
\begin{enumerate}
\item A \emph{fundamental} system if $\overline{\left\langle { \{x_i : i \in \mathbb{N} \} }\right\rangle}=\mcB$.
\item A \emph{minimal} system if $x_j \notin \overline{\left\langle { \{x_i : i \in \mathbb{N} \setminus\{j\} \} }\right\rangle}$.
\item A \emph{total} if for each $x\ne 0$ there exists $i\in \mathbb{N}$ such that $x_i^{*}(x)\ne 0$.
\item A \emph{biorthogonal} system if $x_i^*(x_j)=\de_{ij}$, for all $i,j\in \mathbb{N}$.
\item A \emph{M-basis} if it is a fundamental minimal, total and biorthogonal system. 
\end{enumerate}
\end{Def}
The first problem we consider had its beginning with a question raised by Banach.  He asked whether every separable Banach space has a S-basis.   Mazur gave a partial answer. He proved that every infinite-dimensional separable Banach space contains an infinite-dimensional subspace with a S-basis.  

In 1972, Enflo \cite{EN} answered Banach's question in the negative by providing a separable Banach  space  $\mcB$, without a S-basis and without the approximation property (i.e., every compact operator on $\mcB$ is the limit of a sequence of finite rank operators).  Every Banach space with a S-basis has the approximation property and  Grothendieck \cite{GR} proved that if a Banach space had the approximation property, then it would also have a S-basis.  In the first section we show that, given $\mcB$ there exists separable Hilbert spaces $\mcH_1$ and $\mcH_2$ such that  $\mcH_1 \subset \mcB   \subset  \mcH_2$ as continuous dense embeddings. The existence of $\mcH_1$ is the best possible improvement of Mazur's Theorem, while the existence of $\mcH_2$ shows that $\mcB$ is very close to the best possible case in a well-defined manner.

The second problem we consider is associated with a weaker structure discovered by Marcinkiewicz \cite{M}. He showed that every separable Banach space  $\mcB$ has a biorthogonal system  $\{x_n, x_n^*\}$,   with $\overline{\left\langle {\{x_n\} }\right\rangle}=\mcB$. This system has  many of the properties of an S-basis and is now known as a M-basis for $\mcB$.   A well-known open problem for the M-basis is whether one can choose the system $\{x_n, x_n^*\}$ such that $\left\| {x_n } \right\|\left\| {x_n^* } \right\| = 1$ (see Diestel \cite{D}). This  is called the M-basis problem for separable Banach spaces.  It  has been studied by Singer \cite{SI}, Davis and Johnson \cite{DJ}, Ovsepian and Pelczy\'{n}iski \cite{OP},  Pelczy\'{n}iski \cite{PE} and Plichko \cite{PL}.  The work of Ovsepian and Pelczy\'{n}iski \cite{OP} led to the construction of a bounded M-basis, while  that of Pelczy\'{n}iski \cite{PE} and Plichko \cite{PL} led to independent proofs that, for every  $\e>0$, it is possible to find a biorthogonal system with the property that $\left\| {x_n } \right\|\left\| {x_n^* } \right\| < 1+\e$.   The question of whether we can set $\e=0$ has remained unanswered since 1976.  In this case, we provide a positive answer by constructing a biorthogonal system with the property that $\left\| {x_n } \right\|\left\| {x_n^* } \right\| = 1$.
\section{The S-basis Problem}
In  this section, we construct our Hilbert space  rigging of any given separable Banach space as continuous dense embeddings.  We begin with the construction of $\mcH_2$.
\begin{thm} Suppose ${\mathcal{B}}$ is a separable  Banach space, then there exist a separable Hilbert space ${\mathcal{H}}_2$ such that, ${\mathcal{B}} \subset {\mathcal{H}_2}$ as a continuous dense embedding.
\end{thm}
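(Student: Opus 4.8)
The plan is to manufacture the Hilbert inner product on $\mcB$ directly from a countable total family of functionals and then pass to a completion. Since $\mcB$ is separable, I would first fix a dense sequence $(x_n)$ in $\mcB$ and, using the Hahn--Banach theorem, choose for each $n$ a functional $f_n \in \mcB^*$ with $\|f_n\|_{\mcB^*}=1$ and $f_n(x_n)=\|x_n\|_{\mcB}$. A short argument then shows that the family $(f_n)$ is \emph{total}: given $x\ne 0$, normalize so that $\|x\|_{\mcB}=1$ and pick $x_n$ with $\|x-x_n\|_{\mcB}<\tfrac12$; then $\|x_n\|_{\mcB}>\tfrac12>\|x-x_n\|_{\mcB}$, so $f_n(x)=\|x_n\|_{\mcB}+f_n(x-x_n)\ge \|x_n\|_{\mcB}-\|x-x_n\|_{\mcB}>0$. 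Hence $f_n(x)=0$ for every $n$ forces $x=0$.

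With the total family in hand, I would define on $\mcB$ the sesquilinear form
\[
\langle x,y\rangle_2 \;=\; \sum_{n=1}^{\iy} 2^{-n}\, f_n(x)\,\overline{f_n(y)} .
\]
Because $|f_n(x)|\le \|x\|_{\mcB}$, the series converges absolutely and $\langle x,x\rangle_2 \le \|x\|_{\mcB}^2$, so the associated seminorm $\|x\|_2:=\langle x,x\rangle_2^{1/2}$ obeys $\|x\|_2\le \|x\|_{\mcB}$. The role of totality is precisely to upgrade this to positive definiteness: $\langle x,x\rangle_2=\sum_n 2^{-n}|f_n(x)|^2=0$ gives $f_n(x)=0$ for all $n$, whence $x=0$. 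Thus $\|\cdot\|_2$ is a genuine norm on $\mcB$ arising from an inner product.

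Finally I would take $\mcH_2$ to be the completion of $(\mcB,\|\cdot\|_2)$; since $\|\cdot\|_2$ comes from an inner product, its continuous extension makes $\mcH_2$ a Hilbert space, and the canonical map $\mcB\to\mcH_2$ is an isometry onto a dense subspace, hence injective. The estimate $\|x\|_2\le\|x\|_{\mcB}$ makes this map continuous, so $\mcB\subset\mcH_2$ is a continuous dense embedding. Separability of $\mcH_2$ follows because $\|\cdot\|_{\mcB}$-convergence forces $\|\cdot\|_2$-convergence, so the dense sequence $(x_n)$ is already $\|\cdot\|_2$-dense in $\mcB$, which is in turn dense in $\mcH_2$. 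The only genuinely substantive step is the existence of the countable total family, which is exactly where separability enters; everything after that is the standard weighted-$\ell^2$ construction, and the main point to watch is that the form is positive \emph{definite} rather than merely non-negative — which is precisely what totality buys.
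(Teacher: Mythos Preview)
Your argument is correct and is essentially the paper's own proof: both fix a dense sequence $(x_n)$, take Hahn--Banach functionals attaining the norm at $x_n$, form the weighted $\ell^2$-sum $\sum 2^{-n}|f_n(\cdot)|^2$, and complete. The paper phrases the functionals as duality mappings $x_n^*$ (with $\|x_n^*\|=\|x_n\|$) and absorbs the normalization into the weight $t_n=1/(\|x_n^*\|^2 2^n)$, but after that rescaling the two inner products coincide; your version is in fact more explicit in checking totality (hence positive definiteness) and separability, which the paper leaves as ``easy to see''.
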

\begin{proof} Let $\{ x_n \} $ be a countable dense sequence in ${\mathcal{B}}$ and let $\{ x_n^* \} $ be any fixed set of corresponding duality mappings (i.e., $x_n^*  \in {\mathcal{B}^*}$, the dual space of ${\mathcal{B}}$ and $
x_n^* (x_n ) = \left\langle {x_n ,x_n^* } \right\rangle  = \left\| {x_n } \right\|_{\mathcal{B}}^2 = \left\| {x_n^* } \right\|_{\mathcal{B}^*}^2 $).   For each $n$, let $t_n =\tf{1}{\lt\|x_n^*\rt\|^{2}2^n}$ and define $\left( {u,v} \right)$ by:
\[
\left( {u,v} \right) = \sum\nolimits_{n = 1}^\infty  {t_n x_n^*(u)} \bar x_n^* (v)=\sum\nolimits_{n = 1}^\infty  {\tf{1}{\lt\|x_n^*\rt\|^2 2^{n}} x_n^* (u)} \bar x_n^*(v).
\]
It is easy to see that $\left( {u,v} \right)$ is an inner product on ${\mathcal{B}}$.  Let $
{\mathcal{H}_2}$ be the completion of ${\mathcal{B}}$ with respect to this inner product.   It is clear that ${\mathcal{B}}$ is dense in ${\mathcal{H}}_2$, and 
\[
\left\| u \right\|_{{\mcH}_2}^2  = \sum\nolimits_{n = 1}^\infty  {t_n \left| {x_n^* (u)} \right|^2 }  \le \sup _n \tf{1}{\lt\|x_n^*\rt\|^2} \left| {x_n^* (u)} \right|^2 = \left\| u \right\|_{\mathcal{B}}^2,
\]
so the embedding is continuous.
\end{proof}
In order to construct our second Hilbert space, we need the following result by Lax \cite{L}.
\begin{thm}[Lax]\label{L: lax} Let $A \in L[{\mathcal{B}}]$.  If $A$ is selfadjoint on ${\mathcal{H}}_2$ (i.e., $\left( {Ax,y} \right)_{\mathcal{H}_2}  = \left( {x,Ay} \right)_{\mathcal{H}_2}, 
\forall x{\text{,}}y \in {\mathcal{B}}$), then  $A$ has a bounded extension to ${\mathcal{H}}_2$ and $
\left\| A \right\|_{\mathcal{H}_2}  \leqslant M \left\| A \right\|_{\mathcal{B}}$ for some positive constant $M$.
\end{thm}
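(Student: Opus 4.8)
The plan is to follow Lax's symmetrization argument and reduce everything to a single norm estimate on the dense subspace $\mcB \subset \mcH_2$, after which a routine density argument produces the bounded extension. Concretely, I would first prove that
\[
\lt\| Ax \rt\|_{\mcH_2} \le \lt\| A \rt\|_{\mcB} \lt\| x \rt\|_{\mcH_2}, \qquad x \in \mcB,
\]
and then observe that, since $\mcB$ is dense in $\mcH_2$ by the preceding theorem, the operator $A$ extends uniquely to a bounded operator on all of $\mcH_2$ satisfying the same bound; this already gives the claim, indeed with $M = 1$.

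The heart of the matter is the norm estimate, which I would obtain by iterating the Cauchy--Schwarz inequality. Writing $|\cdot| = \lt\|\cdot\rt\|_{\mcH_2}$ for brevity and using selfadjointness, for any $x \in \mcB$ one has
\[
|Ax|^2 = \lt( Ax, Ax \rt)_{\mcH_2} = \lt( A^2 x, x \rt)_{\mcH_2} \le |A^2 x|\,|x|.
\]
Since selfadjointness of $A$ passes to every power $A^{2^k}$, applying the same inequality to $A^{2^k}$ and chaining the results gives, by induction on $k$,
\[
|Ax| \le |A^{2^k} x|^{1/2^k}\, |x|^{1 - 1/2^k}, \qquad k \ge 1.
\]

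To finish, I would feed in the continuous embedding from the preceding theorem, namely $\lt\|u\rt\|_{\mcH_2} \le \lt\|u\rt\|_{\mcB}$, together with $A \in L[\mcB]$, to bound the high power in the $\mcB$-norm:
\[
|A^{2^k} x| \le \lt\| A^{2^k} x \rt\|_{\mcB} \le \lt\| A \rt\|_{\mcB}^{2^k}\, \lt\| x \rt\|_{\mcB}.
\]
Substituting this into the iterated inequality yields $|Ax| \le \lt\|A\rt\|_{\mcB}\, \lt\|x\rt\|_{\mcB}^{1/2^k}\,|x|^{1-1/2^k}$, and letting $k \ra \iy$ the two $\mcB$-dependent exponents tend to $0$ and $1$ respectively, so that the whole factor collapses to $|x|$, giving the desired estimate.

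The step I expect to be the main obstacle is the exponent bookkeeping in the induction: one must verify that the fractional powers of $|A^{2^k}x|$ and $|x|$ combine correctly at each stage and, crucially, that the $\mcB$-norm contributions $\lt\|A\rt\|_{\mcB}^{2^k}$ and $\lt\|x\rt\|_{\mcB}$ enter only through the power $1/2^k$ and hence wash out in the limit. This is precisely what allows the constant to be taken independent of the embedding. By contrast, the passage from $\mcB$ to its completion $\mcH_2$ is the standard extension of a uniformly continuous linear map to the closure of its domain and requires no further work.
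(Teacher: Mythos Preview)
Your proposal is correct and follows essentially the same Lax symmetrization argument as the paper: iterate Cauchy--Schwarz plus selfadjointness to push the $\mcH_2$-norm of $Ax$ onto high powers $A^{2^k}x$, bound those via the embedding $\|\cdot\|_{\mcH_2}\le\|\cdot\|_{\mcB}$, and let $k\to\infty$. The only cosmetic differences are that the paper normalizes $\|x\|_{\mcH_2}=1$ at the outset (whereas you carry the factor $|x|^{1-1/2^k}$ explicitly), and the paper writes the iteration index as ``$2n$'' where your $2^k$ is the intended meaning.
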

\begin{proof} Let $x  \in {\mathcal{B}}$
 and, without loss, we can assume that $M = 1$ and $\left\| x  \right\|_{\mathcal{H}_2}  = 1$.  Since $A$ is selfadjoint, 
\[
\left\| {Ax } \right\|_{\mathcal{H}_2}^2  = \left( {Ax ,Ax } \right) = \left( {x ,A^2 x } \right) \leqslant \left\| x  \right\|_{\mathcal{H}_2} \left\| {A^2 x } \right\|_{\mathcal{H}_2}  = \left\| {A^2 x } \right\|_{\mathcal{H}_2}. 
\]
Thus, we have $\left\| {Ax } \right\|_{\mathcal{H}_2}^4  \leqslant \left\| {A^4 x } \right\|_{\mathcal{H}_2}$, so it is easy to see that $
\left\| {Ax } \right\|_{\mathcal{H}_2}^{2n}  \leqslant \left\| {A^{2n} x } \right\|_{\mathcal{H}_2}$ for all $n$.  It follows that: 
\[
\begin{gathered}
  \left\| {Ax } \right\|_{\mathcal{H}_2}  \leqslant (\left\| {A^{2n} x } \right\|_{\mathcal{H}_2} )^{1/2n}  \leqslant (\left\| {A^{2n} x } \right\|_\mathcal{B} )^{1/2n}  \hfill \\
  {\text{          }} \leqslant (\left\| {A^{2n} } \right\|_\mathcal{B} )^{1/2n} (\left\| x  \right\|_\mathcal{B} )^{1/2n}  \leqslant \left\| A \right\|_\mathcal{B} (\left\| x  \right\|_\mathcal{B} )^{1/2n}.  \hfill \\ 
\end{gathered} 
\]
Letting $n \to \infty $, we get that $\left\| {Ax } \right\|_{\mathcal{H}_2}  \leqslant \left\| A \right\|_{\mathcal{B}} $ for any $x$ in the dense set of the unit ball  $ B_{\mathcal{H}_2} \cap \mcB$.  Since the norm is attained on a dense set of the unit ball, we are done.
\end{proof} 
For our second Hilbert space, fix $\mcB$ and define ${\mcH}_1$ by:
\[
\begin{gathered}
  {\mcH}_1  = \left\{ {u \in \mcB \left| {\;\sum\nolimits_{n = 1}^\infty  { t_n^{-1}\left| {\left( {u, {x}_n } \right)_{{{}}2 } } \right|^2  < \infty } } \right.} \right\},\quad {\text{with}} \hfill \\
  \quad \quad \quad \quad \left( {u,v} \right)_1  = \sum\nolimits_{n = 1}^\infty  {t_n^{-1} \left( {u, {x}_n } \right)_{2 } \left( {{x}_n ,v } \right)_{{{}}2 } .}  \hfill \\ 
\end{gathered}
\]
For $u \in \mcB$, let ${T}_{12}u$ be defined by ${T}_{12}u = \sum\nolimits_{n = 1}^\infty { t_n\left( {u, {x}_n } \right)_{2} {x}_n}$.
\begin{thm}    The operator ${{T}}_{12}$ is a positive trace class operator on $\mcB$ with a bounded extension to  ${\mathcal{H}}_2$.  In addition, ${\mathcal{H}}_1  \subset {\mathcal{B}} \subset {\mathcal{H}}_2 $ (as continuous dense embeddings),  $\left( {T_{12}^{1/2} u,\;T_{12}^{1/2} v} \right)_1  = \left( {u,\;v} \right)_2$ and  $\left( {T_{12}^{ - 1/2} u,\;T_{12}^{ - 1/2} v} \right)_2  = \left( {u,\;v} \right)_1$. 
\end{thm}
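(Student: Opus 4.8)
The plan is to read the whole statement as the assertion that $(\mcH_1,\mcB,\mcH_2)$ is a Gelfand triple generated by the positive operator $T_{12}=\sum_n t_n(\,\cdot\,,x_n)_2\,x_n$, i.e. the frame-type operator of the system $\{\sqrt{t_n}\,x_n\}$ in $\mcH_2$, with $\mcH_1$ realised as the form domain of its inverse. Most of the claims are then estimates on $T_{12}$, and the two displayed identities become the defining relations of a scale of Hilbert spaces.

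First I would pin down $T_{12}$ on $\mcB$. For $u\in\mcB$ the Cauchy--Schwarz inequality in $\mcH_2$ together with the embedding of Theorem~2.1 gives $|(u,x_n)_2|\le\|u\|_2\|x_n\|_2\le\|u\|_{\mcB}\|x_n\|_{\mcB}$, whence $\sum_n t_n|(u,x_n)_2|\,\|x_n\|_{\mcB}\le\|u\|_2\sum_n t_n\|x_n\|_{\mcB}^2=\|u\|_2\sum_n 2^{-n}=\|u\|_2$. Thus the series defining $T_{12}u$ converges absolutely in $\mcB$ and $\|T_{12}u\|_{\mcB}\le\|u\|_2\le\|u\|_{\mcB}$, so $T_{12}\in L[\mcB]$ with norm at most $1$. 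The identity $(T_{12}u,v)_2=\sum_n t_n(u,x_n)_2(x_n,v)_2$ shows $T_{12}$ is symmetric and positive on $\mcB\subset\mcH_2$, so Lax's Theorem~\ref{L: lax} supplies a bounded self-adjoint extension to $\mcH_2$. Summing the diagonal, $\operatorname{tr}T_{12}=\sum_n t_n\|x_n\|_2^2\le\sum_n t_n\|x_n\|_{\mcB}^2=1$, so $T_{12}$ is trace class. Finally $(T_{12}u,u)_2=\sum_n t_n|(u,x_n)_2|^2=0$ forces $u\perp\overline{\operatorname{span}\{x_n\}}=\mcH_2$, so $T_{12}$ is injective and the positive root $T_{12}^{1/2}$ and the densely defined $T_{12}^{-1/2}$ exist via the spectral calculus.

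Next I would set up the duality. Interpreting $(u,v)_1=\sum_n t_n^{-1}(u,x_n)_2(x_n,v)_2$ as $(T_{12}^{-1}u,v)_2$, one is led to take $\mcH_1=\operatorname{dom}(T_{12}^{-1/2})=\operatorname{ran}(T_{12}^{1/2})$ with $(u,v)_1=(T_{12}^{-1/2}u,T_{12}^{-1/2}v)_2$. Granting this, both isometry relations are immediate: $(T_{12}^{-1/2}u,T_{12}^{-1/2}v)_2=(u,v)_1$ is the definition, and $(T_{12}^{1/2}u,T_{12}^{1/2}v)_1=(T_{12}^{-1/2}T_{12}^{1/2}u,T_{12}^{-1/2}T_{12}^{1/2}v)_2=(u,v)_2$ since $T_{12}^{-1/2}T_{12}^{1/2}$ is the identity on a dense domain. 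The embeddings follow as well: for $u=T_{12}^{1/2}w\in\mcH_1$ one has $\|u\|_2^2=(T_{12}w,w)_2\le\|T_{12}\|_{\mcH_2}\|w\|_2^2\le\|u\|_1^2$, so $\mcH_1\hookrightarrow\mcH_2$ is a contraction; combined with $\mcB\hookrightarrow\mcH_2$ this makes the inclusion $\mcH_1\hookrightarrow\mcB$ a closed map from a Hilbert space to a Banach space, hence bounded by the closed graph theorem. Density of $\mcH_1$ in $\mcB$ I would extract from $\operatorname{ran}T_{12}\subset\mcH_1$ together with the injectivity of $T_{12}$ and the $\mcB$-density of $\{x_n\}$.

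The main obstacle is the step I glossed over: justifying that the explicit form $\sum_n t_n^{-1}(u,x_n)_2(x_n,v)_2$ genuinely represents $(T_{12}^{-1}u,v)_2$ and defines a complete inner product space. The difficulty is that $T_{12}$ is only the \emph{frame} operator of the non-orthogonal system $\{\sqrt{t_n}\,x_n\}$, so the naive inverse-weight series $\sum_n t_n^{-1}(\,\cdot\,,x_n)_2\,x_n$ equals $T_{12}^{-1}$ only when that system is orthogonal in $\mcH_2$, which it cannot be since $T_{12}$ is compact. The robust route is therefore to fix $\mcH_1$ through the spectral resolution of $T_{12}$, as $\operatorname{dom}(T_{12}^{-1/2})$ with the form above, and to treat the displayed series as a representation of that form; the remaining labor is then the careful handling of the unbounded operator $T_{12}^{-1/2}$, the completeness of $\mcH_1$, and the $\mcB$-density just indicated. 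Once $\mcH_1$ is pinned down in this way, the positivity, the trace-class bound, and the chain $\mcH_1\subset\mcB\subset\mcH_2$ all follow from the estimates above.
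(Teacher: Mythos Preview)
Your treatment of $T_{12}$ as bounded, positive, trace class on $\mcB$, and your appeal to Lax's theorem for the extension to $\mcH_2$, agree with the paper's proof; your estimate $\|T_{12}u\|_{\mcB}\le\|u\|_2$ is in fact sharper than the paper's, which only records $\|T_{12}u\|_{\mcB}^2\le M\|u\|_2^2$ via Cauchy--Schwarz on the coefficient sequences. Where you diverge is in the handling of $\mcH_1$ and the two displayed identities. The paper works literally with the series $(u,v)_1=\sum_n t_n^{-1}(u,x_n)_2(x_n,v)_2$, asserts density of $\mcH_1$ in $\mcB$ via the finite sums $u_N=\sum_{k\le N}t_k^{-1}(u,x_k)_2\,x_k$, and then dismisses both identities as ``an easy calculation.'' You correctly point out that this is not easy: since $\{x_n\}$ is not orthogonal in $\mcH_2$, the inverse-weight series $\sum_n t_n^{-1}(\,\cdot\,,x_n)_2\,x_n$ is \emph{not} $T_{12}^{-1}$, and so $(T_{12}^{1/2}u,T_{12}^{1/2}v)_1=(u,v)_2$ does not follow from the paper's definition by any direct manipulation. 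Your remedy, redefining $\mcH_1$ as $\operatorname{dom}(T_{12}^{-1/2})$ with the form inner product, is the standard Gelfand-triple construction and makes the identities tautological; this is genuinely more robust than what the paper writes.

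One residual gap in your plan: with the spectral definition $\mcH_1=\operatorname{ran}(T_{12}^{1/2})$ you land inside $\mcH_2$, not automatically inside $\mcB$. Your boundedness estimate gives $T_{12}(\mcH_2)\subset\mcB$, but not $T_{12}^{1/2}(\mcH_2)\subset\mcB$, so the set-theoretic inclusion $\mcH_1\subset\mcB$ that your closed-graph argument presupposes is not yet established. You will need either a direct argument that $T_{12}^{1/2}$ maps $\mcH_2$ into $\mcB$, or else to intersect with $\mcB$ by hand (as the paper's definition does) and then reconcile that with the spectral picture.
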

\begin{proof}
First, since terms of the form $\{u_N  = \sum\nolimits_{k = 1}^N { t_k^{-1} } \left( {u,{x}_k } \right)_2 {x}_k :\;u \in \mcB \}$ are dense in ${\mathcal{B}}$, we see that ${\mathcal{H}}_1 $ is dense in ${\mathcal{B}}$.  It follows that ${\mathcal{H}}_1$ is also dense in ${\mathcal{H}}_2 $.

For the operator ${T}_{12}$, we see that  ${\mathcal{B}} \subset {\mathcal{H}}_2 \Rightarrow   \left( { u ,{x}_n } \right)_2$ is defined for all $u \in {\mathcal{B}}$, so that  ${{T}}_{12}$ maps ${\mathcal{B}} \to {\mathcal{B}}$ and:
\[
\left\| {{{T}}_{12} u} \right\|_{\mathcal{B}}^2  \le \left[ {\sum\nolimits_{n = 1}^\infty   {t_n^2 }\left\| {x}_n  \right\|_{\mathcal{B}}^2 } \right]\left[ {\sum\nolimits_{n = 1}^\infty  {\left| {\left( {u,{x}_n } \right)_2 } \right|^2 } } \right] = M\left\| u \right\|_2^2  \le M\left\| u \right\|_{\mathcal{B}}^2.
\]
Thus, ${{T}}_{12}$ is a bounded operator on ${\mathcal{B}}$.  It is clearly trace class and, since $\left( {T_{12} u,\; u} \right)_2= \sum\nolimits_{n = 1}^\infty  {t_n\lt|\left( {u, {x}_n } \right)_{2}\rt|^2}>0$, it is positive.  From here, it's easy to see that $ T_{12}$ is selfadjoint on ${\mathcal{H}}_2$ so, by Theorem 2.2 it has a bounded extension to ${\mathcal{H}}_2$. 

An easy calculation now shows that $\left( {T_{12}^{1/2} u,\;T_{12}^{1/2} v} \right)_1  = \left( {u,\;v} \right)_2$ and  $\left( {T_{12}^{ - 1/2} u,\;T_{12}^{ - 1/2} v} \right)_2  = \left( {u,\;v} \right)_1$. 
\end{proof}
Since the counter example of Enflo, the only direct information about a Banach space without a basis has been the following theorem of Mazur:
\begin{thm}Every infinite dimensional separable Banach contains a infinite dimensional subspace with a basis. 
\end{thm}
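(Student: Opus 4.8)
The plan is to reduce the statement to the construction of a single \emph{basic sequence}, that is, a sequence $(x_n)$ of unit vectors in $\mcB$ that is an S-basis of its own closed linear span $\ov{\left\langle \{x_n\}\right\rangle}$; that span will then be the desired infinite-dimensional subspace carrying an S-basis. The whole argument rests on the Gr\"unblum criterion: a sequence $(x_n)$ of nonzero vectors is basic if and only if there is a constant $K \ge 1$ with $\left\|\sum_{i=1}^{m} a_i x_i\right\| \le K \left\|\sum_{i=1}^{p} a_i x_i\right\|$ for all $m \le p$ and all scalars $a_1,\ld,a_p$. I would first record this criterion in the direction I need (uniform bound $\Ra$ basic), whose proof amounts to noting that the partial-sum operators $P_m$, defined on the algebraic span, are uniformly bounded by $K$, hence extend continuously to the closed span, where they converge strongly to the identity on a dense set and therefore everywhere; this simultaneously yields existence and uniqueness of the coordinate expansion.

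The key step, and the one I expect to be the main obstacle, is an almost-orthogonality lemma of Mazur type: if $F \ci \mcB$ is finite-dimensional and $\e > 0$, then there is a unit vector $x \in \mcB$ with $\|f\| \le (1+\e)\|f + \la x\|$ for every $f \in F$ and every scalar $\la$. To prove it I would use compactness of the unit sphere $S_F$ of $F$ to choose a finite $\de$-net $f_1,\ld,f_k$, select norming functionals $\ph_i \in \mcB^*$ with $\|\ph_i\| = 1$ and $\ph_i(f_i) = 1$ by Hahn--Banach, and then exploit the infinite-dimensionality of $\mcB$ to pick a unit vector $x \in \bi_{i=1}^{k}\ker\ph_i$, a nontrivial intersection since it is cut out by finitely many functionals. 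For $f \in S_F$ one takes $f_i$ with $\|f - f_i\| < \de$ and estimates $\|f + \la x\| \ge |\ph_i(f + \la x)| = |\ph_i(f)| \ge 1 - \de$, using $\ph_i(x) = 0$; homogeneity upgrades this to all of $F$, and taking $\de$ small enough that $(1-\de)^{-1} \le 1+\e$ completes the lemma. The delicate points are the uniformity of the estimate over the net and the passage from the net back to all of $S_F$.

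Finally I would iterate: fix any unit vector $x_1$, choose $\e_n > 0$ with $\prod_{n \ge 1}(1+\e_n) = K < \iy$, and, having built $x_1,\ld,x_n$ with span $F_n$, apply the lemma to produce a unit vector $x_{n+1}$ with $\|f\| \le (1+\e_n)\|f + \la x_{n+1}\|$ for all $f \in F_n$ and all scalars $\la$. Telescoping these inequalities gives $\left\|\sum_{i=1}^{m} a_i x_i\right\| \le \prod_{i=m}^{p-1}(1+\e_i)\left\|\sum_{i=1}^{p} a_i x_i\right\| \le K\left\|\sum_{i=1}^{p} a_i x_i\right\|$ whenever $m \le p$, so by the Gr\"unblum criterion $(x_n)$ is basic, and $\ov{\left\langle \{x_n\}\right\rangle}$ is an infinite-dimensional subspace of $\mcB$ with an S-basis. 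I note that the dense embedding $\mcH_1 \ci \mcB$ established above already furnishes a dense Hilbertian subspace, which is the promised strengthening of this theorem; however, converting an orthonormal basis of $\mcH_1$ into an S-basis of a closed subspace in the $\mcB$-norm would still require the selection mechanism above, so the self-contained construction seems cleanest.
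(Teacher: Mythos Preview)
Your proposal is the classical Mazur construction of a basic sequence (Gr\"unblum criterion, the finite-net Hahn--Banach selection lemma, and iteration with a summable sequence of $\e_n$'s), and it is correct as outlined. However, there is nothing to compare it against: the paper does not prove this theorem at all. It is stated without proof, immediately after Theorem~2.3, as the well-known result of Mazur, and is invoked only to frame the claim that Theorems~2.1 and~2.3 constitute ``the best possible improvement'' of it. So your write-up is not an alternative to the paper's argument but rather a supplied proof for a result the paper merely quotes.

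One minor remark on your sketch: in the lemma you should also note why the case of large $|\la|$ is harmless (for $|\la|$ large compared to $\|f\|$ the reverse triangle inequality already gives $\|f+\la x\| \ge |\la| - \|f\|$, so only bounded $\la$ matter, and on that range the net argument applies after rescaling to $f \in S_F$); your reduction ``homogeneity upgrades this to all of $F$'' handles the $f$-scaling but you should make explicit that the inequality $\|f+\la x\| \ge 1-\de$ for $f \in S_F$ already covers all scalars $\la$ because $\ph_i(\la x)=0$ regardless of $\la$. With that clarification the argument is complete.
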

Theorems 2.1 and 2.3 show that, even if a Banach space does not have a basis, it is very close to the best possible case. 
\begin{rem}
Historically, Gross \cite{G} first proved that every real separable Banach space $\mcB$ contains a separable Hilbert space (version of $\mcH_1$), as a dense embedding, and that this space is the support of a Gaussian measure.   Then Kuelbs \cite{KB} showed that one can construct $\mcH_2$ so that $\mcH_1 \subset \mcB \subset \mcH_2$ as continuous dense embeddings, with $\mcH_1$  and $\mcH_2$ related by Theorem 2.3.

A particular Gross-Kuelbs construction of $\mcH_2$ was used in \cite{GZ} to provide the foundations for the Feynman path integral formulation of quantum mechanics \cite{FH} (see also \cite{GZ1}). 

This construction was also used in  \cite{GBZS} to show that  every bounded linear operator $A$ on a separable Banach space $\mcB$ has a adjoint $A^*$ defined on $\mcB$, such that (see below): 
\begin{enumerate}
\item $A^ * A$ is m-accretive (i.e., if $x \in \mcB$ and  $x^*$ is a corresponding duality, then $\left\langle {A^ * Ax,{x^*}} \right\rangle \ge 0$),		
\item $(A^ * A)^ *   = A^ * A$ (selfadjoint), and 
\item $I + A^ * A$ has a bounded inverse. 
\end{enumerate}
\end{rem}
\begin{ex}
The following example shows how easy it is to construct an adjoint $A^*$ satisfying all the above conditions, using only $\mcH_2$. Let $\Om$ be a bounded open domain in $\R^n$ with a class $\C^1$ boundary and let $\mcH_0^1[\Om]$, be the set of all real-valued functions $u \in L^2[\Om]$ such that their first order weak partial derivatives are in $L^2[\Om]$ and vanish on the boundary.  It follows that 
\[
\left( {u,v} \right) = \int_\Omega  {\nabla u({\mathbf{x}}) \cdot \nabla v({\mathbf{x}})d{\mathbf{x}}} =\left\langle {u,J_0 v} \right\rangle, 
\]
defines an inner product on $\mcH_0^1[\Om]$, where $J_0$ is the conjugate isomorphism between $\mcH_0^1[\Om]$ and its dual ${\mcH^{ - 1}}[\Omega ]$.  The  space  ${\mcH^{ - 1}}[\Omega ]$ coincides with the set of all distributions of the form
\[
u = {h_0} + \sum\limits_{i = 1}^n {\frac{{\partial {h_i}}}{{\partial {x_i}}}} ,\quad {\text{where}}\;{h_i} \in {L^2}[\Omega ],\quad 1 \leqslant i \leqslant n.
\] 
In this case we also have for $p \in [2, \iy)$ and $q \in (1, 2], \tf{1}{p}+\tf{1}{q}=1$ that,  
\[
\mcH_0^1[\Omega ] \subset {L^p}[\Omega ] \subset {L^q}[\Omega ] \subset {\mcH^{ - 1}}[\Omega ]
\]  
all as continuous dense embeddings.

From the inner product on $\mcH_0^1[\Om]$ we see that $J_0=-\De$, the Laplace operator under Dirichlet homogeneous boundary conditions on $\Om$.  If we set $\mcH_1=\mcH_0^1[\Om], \; \mcH_2=\mcH^{-1}$ and $J=J_0^{-1}$, then for every $A \in \mcC[L^p(\Om)]$ (i.e., the closed densely defined linear operators on $L^p(\Om)$), we  obtain $A^* \in \mcC[L^p(\Om)]$,  where $A^*=J^{-1}A'J |_p =[-\De] A'[-\De]^{-1}|_p$ for each $A' \in \mcC[L^{q }(\Om)]$.  It is now easy to show that $A^*$ satisfies the conditions (1)-(3) above for an adjoint operator on $L^p(\Om)$.
\end{ex}
\section{The M-basis Problem}
To understand the M-basis problem and its solution in a well-known setting, let  $\R^2$ have its standard inner product $( \cdot ,  \cdot )$ and let $x_1, \; x_2$ be any two
independent basis vectors. Define a new inner product on $\R^2$ by
\beqn
\begin{gathered}
  \left\langle {y}
 \mathrel{\left | {\vphantom {y z}}
 \right. \kern-\nulldelimiterspace}
 {z} \right\rangle  = {t_1}\left( {x_1^{} \otimes x_1^{}} \right)\left( {y \otimes z} \right) + {t_2}\left( {x_2^{} \otimes x_2^{}} \right)\left( {y \otimes z} \right) \hfill \\
  \quad \quad  = {t_1}\left( {y,x_1^{}} \right)\left( {z,x_1^{}} \right) + {t_2}\left( {y,x_2^{}} \right) \left( {z,x_2^{}} \right), \hfill \\ 
\end{gathered} 
\eeqn
where $t_1, \, t_2>0, \; t_1+t_2=1$.  Define new functionals $S_1$ and $S_2$ by:

\[
{S_1}(x) = \frac{{\left\langle {x}
 \mathrel{\left | {\vphantom {x {{x_1}}}}
 \right. \kern-\nulldelimiterspace}
 {{{x_1}}} \right\rangle }}{{{\alpha _1}\left\langle {{{x_1}}}
 \mathrel{\left | {\vphantom {{{x_1}} {{x_1}}}}
 \right. \kern-\nulldelimiterspace}
 {{{x_1}}} \right\rangle }},\quad {S_2}(x) = \frac{{\left\langle {x}
 \mathrel{\left | {\vphantom {x {{x_2}}}}
 \right. \kern-\nulldelimiterspace}
 {{{x_2}}} \right\rangle }}{{{\alpha _2}\left\langle {{{x_2}}}
 \mathrel{\left | {\vphantom {{{x_2}} {{x_2}}}}
 \right. \kern-\nulldelimiterspace}
 {{{x_2}}} \right\rangle }},\quad {\text{for}}\quad y \in {\mathbb{R}^2}.
\]
Where $\al_1, \; \al_2 >0$ are chosen to ensure that $\left\| {{S_1}} \right\| = \left\| {{S_2}} \right\| = 1$.
Note that, if $(x_1, x_2)=0$, $S_1$ and $S_2$ reduce to 
\[
{S_1}(x) = \frac{{\left( {x,x_1^{}} \right)}}{{{\alpha _1}\left\| {{x_1}} \right\|}},\quad {S_2}(x) = \frac{{\left( {x,x_2^{}} \right)}}{{{\alpha _2}\left\| {{x_2}} \right\|}}.
\]
Thus, we can define many equivalent inner products on $\R^2$ and many linear functionals with the same properties but different norms. 

The following example shows how this construction can be of use. 
\begin{ex}
In this example, let $x_1=e_1$ and $x_2=e_1+e_2$, where $e_1=(1,0), \; e_2=(0,1)$. In this case, the biorthogonal functionals are generated by the vectors $\bar{x}_1=e_1-e_2$ and $\bar{x}_2=e_2$ {\rm{(}{i.e.}, $x_1^*(x) = \left( {x,{{\bar x}_1}} \right),\quad x_2^*(x) = \left( {x,{{\bar x}_2}} \right)$\rm{)}}.  It follows that $(x_1,\bar{x}_2)=0, \; (x_1,\bar{x}_1)=1$ and $(x_2, \bar{x}_1)=0, \; (x_2, \bar{x}_2)=1$.  However,  $\left\| {x_1 } \right\|\left\| {\bar{x}_1 } \right\| = \sqrt 2 ,\quad \left\| {x_2 } \right\|\left\| {\bar{x}_2 } \right\| = \sqrt 2$, so that 
$\left\{ {{x_1},\left( {\; \cdot ,\;{{\bar x}_1}} \right)} \right\}$ and $\left\{ {{x_2},\left( {\; \cdot ,\;{{\bar x}_2}} \right)} \right\}$ fails to solve the M-basis problem on $\R^2$.
 
In this case, we  set $\al_1=1$ and $\al_2= \left\| {{{x}_2}} \right\|$ so that, without changing $x_1$ and $x_2$, and using the inner product from equation (1.1) in the form 
\[
  \left\langle {x}
 \mathrel{\left | {\vphantom {x y}}
 \right. \kern-\nulldelimiterspace}
 {y} \right\rangle  = {t_1}\left( {x,\bar{x}_1^{}} \right)\left( {y,\bar{x}_1^{}} \right) + {t_2}\left( {x,\bar{x}_2^{}} \right) \left( {y,\bar{x}_2^{}} \right), 
\]
$S_1$ and $S_2$ become
\[
{S_1}(x) = \frac{{\left( {x,\bar{x}_1^{}} \right)}}{{\left\| {{\bar{x}_1}} \right\|}},\quad {S_2}(x) = \frac{{\left( {x,\bar{x}_2^{}} \right)}}{{\left\| {{{x}_2}} \right\|}}.
\]
It now follows that $S_i(x_i)=1$ and $S_i(x_j)=0$ for $i \ne j$ and    $\left\| {S_i^{} } \right\|\left\| {x_i^{} } \right\| = 1$,  so that system $\{ {x_1, S_1 } \}$ and $\{ {x_2, S_2 } \}$  solves the M-basis problem.
\end{ex}
\begin{rem} For a given set of independent vectors on a finite dimensional vector space, It is known that the corresponding biorthogonal functionals are unique.  This example shows that uniqueness is only up to a scale factor and this is what we need to produce an M-basis. 
\end{rem}

The following theorem shows how our solution to the M-basis problem for $\R^2$ can be extended to any separable Banach space.

\begin{thm} Let $\mcB$ be a infinite-dimensional separable Banach space.  Then $\mcB$ contains  an  M-basis with the property that $\left\| {x_i } \right\|_{\mcB} \left\| {x_i^* } \right\|_{{\mcB}^*} = 1$ for all $i$. 
\end{thm}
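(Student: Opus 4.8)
The plan is to start from an M-basis supplied by Marcinkiewicz's theorem and correct it, using the Hilbert rigging $\mcH_1\subset\mcB\subset\mcH_2$ of Theorems 2.1 and 2.3, so that each functional becomes \emph{norming} for its partner vector. Begin with a fundamental, minimal, total biorthogonal system $\{x_n,f_n\}$, $f_n(x_m)=\de_{nm}$, and rescale so that $\|x_n\|_{\mcB}=1$ (replacing $f_n$ by $\|x_n\|_{\mcB}f_n$ preserves biorthogonality). Since $f_n(x_n)=1\le\|f_n\|_{\mcB^*}\|x_n\|_{\mcB}=\|f_n\|_{\mcB^*}$, the product is always $\ge 1$, and the target $\|x_n\|_{\mcB}\|f_n\|_{\mcB^*}=1$ holds \emph{iff} $f_n$ attains its norm at $x_n$. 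By minimality $x_n\notin Y_n:=\overline{\langle\{x_m:m\ne n\}\rangle}$, and with fundamentality this forces $Y_n$ to have codimension one, so $\ker f_n=Y_n$ and $\|f_n\|_{\mcB^*}^{-1}=\mathrm{dist}(x_n,Y_n)$. The condition is therefore the Birkhoff--James orthogonality $x_n\perp_{\mathrm{BJ}} Y_n$, exactly the mechanism illustrated for $\R^2$ in Example 3.1. I note that any single duality mapping already furnishes such a norming functional, so the whole difficulty is to realise biorthogonality and norm attainment \emph{simultaneously}.

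The Hilbert space is first used to manufacture biorthogonal functionals of norm at most one. Using $\mcH_2$ from Theorem 2.1, whose embedding satisfies $\|u\|_{\mcH_2}\le\|u\|_{\mcB}$, I orthonormalise a fundamental sequence inside $\mcB$ with respect to $(\cdot,\cdot)_2$; the relation $(T_{12}^{1/2}u,T_{12}^{1/2}v)_1=(u,v)_2$ of Theorem 2.3 ensures the resulting vectors $e_n$ lie in $\mcB$ and stay fundamental there. Setting $g_n(u)=(u,e_n)_2$ gives $g_n\in\mcB^*$ with $g_n(e_m)=\de_{nm}$ and, by Cauchy--Schwarz and the nonexpansive embedding, $\|g_n\|_{\mcB^*}\le\|e_n\|_{\mcH_2}=1\le\|e_n\|_{\mcB}$. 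Evaluating at $e_n/\|e_n\|_{\mcB}$ gives $\|g_n\|_{\mcB^*}\ge\|e_n\|_{\mcB}^{-1}$, so $\|e_n\|_{\mcB}\|g_n\|_{\mcB^*}\in[1,\|e_n\|_{\mcB}]$; this is the $\R^2$ picture, in which the auxiliary inner product supplies biorthogonal functionals whose norms can be tuned.

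The remaining equality $\|e_n\|_{\mcB}\|g_n\|_{\mcB^*}=1$ is the heart of the matter, and I would secure it by an inductive construction rather than by accepting the above $e_n$ wholesale. Suppose $(y_i,g_i)_{i<k}$ have been built with $g_i(y_j)=\de_{ij}$, $\|y_i\|_{\mcB}=1$, and $\|g_i\|_{\mcB^*}=1$. Put $V=\bigcap_{i<k}\ker g_i$, a closed subspace of finite codimension with $V\cap\langle y_1,\dots,y_{k-1}\rangle=\{0\}$, so that choosing $y_k\in V$ automatically yields $g_i(y_k)=0$ for $i<k$. What must be arranged is that $y_k$ admit a \emph{norming} functional $g_k$ lying in the annihilator $\{y_1,\dots,y_{k-1}\}^\perp$, whence $g_k(y_i)=0$ for $i<k$; by uniqueness of biorthogonal functionals for a fundamental system this $g_k$ is then forced, and $\|y_k\|_{\mcB}\|g_k\|_{\mcB^*}=1$. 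The vector $y_k$ is taken close to a prescribed term of a dense sequence (to keep the system fundamental) and is selected, within the Hilbert geometry of $V\subset\mcH_2$, as a direction along which the $\mcB$-norm and the norm $\|g_k\|_{\mcB^*}$ touch, with Lax's Theorem 2.2 keeping the relevant self-adjoint operators boundedly extended so the choices stay controlled from step to step.

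The main obstacle is precisely this last selection: in a general Banach space the weak-$*$ compact convex face of norming functionals at $y_k$ need not meet the finite-codimensional annihilator $\{y_1,\dots,y_{k-1}\}^\perp$, and it is exactly this gap that confines the classical arguments to $\e>0$. The rigging is what makes the intersection nonempty: one uses the $\mcH_2$ inner product to produce, inside $V$, a vector whose Hilbert-space support functional restricts to a $\mcB^*$ functional annihilating the previous $y_i$ and attaining its norm at $y_k$, the passage from the Hilbert norm to the Banach norm being controlled by the nonexpansive embedding and the identities of Theorem 2.3. Once every stage succeeds, the system $\{(x_n,x_n^*)\}:=\{(y_n,g_n)\}$ is biorthogonal by construction, minimal because $y_n\notin\ker g_n\supseteq\overline{\langle\{y_m:m\ne n\}\rangle}$, and fundamental and total by a two-sided dense-sequence bookkeeping; with $\|x_n\|_{\mcB}=1$ and $\|x_n^*\|_{\mcB^*}=1$ it is the desired M-basis.
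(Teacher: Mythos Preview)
Your proposal contains a genuine gap at exactly the point you flag as ``the heart of the matter.'' You correctly reduce the target equality to finding, at each inductive stage, a unit vector $y_k\in V=\bigcap_{i<k}\ker g_i$ admitting a \emph{norming} functional in the annihilator $\{y_1,\dots,y_{k-1}\}^\perp$. But the sentence ``one uses the $\mcH_2$ inner product to produce \dots\ a vector whose Hilbert-space support functional \dots\ [attains] its norm at $y_k$'' is an assertion, not an argument. Concretely: the functional $u\mapsto(u,y_k)_2$ will annihilate anything $\mcH_2$-orthogonal to $y_k$, but from the embedding inequality $\|u\|_2\le\|u\|_\mcB$ one obtains only $\|(\,\cdot\,,y_k)_2\|_{\mcB^*}\le\|y_k\|_2$, while its value at $y_k/\|y_k\|_\mcB$ is $\|y_k\|_2^2/\|y_k\|_\mcB$. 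Norm attainment would therefore force $\|y_k\|_2=\|y_k\|_\mcB$, i.e.\ equality in the embedding, which you have not arranged and which is highly non-generic. Nothing in Theorems~2.2 or~2.3 closes this: Lax's theorem controls operator norms, not the location of norm-attaining vectors, and the identity $(T_{12}^{1/2}u,T_{12}^{1/2}v)_1=(u,v)_2$ relates the two Hilbert inner products to each other, not either of them to $\|\cdot\|_\mcB$. Your own third paragraph already computes $\|e_n\|_\mcB\|g_n\|_{\mcB^*}\in[1,\|e_n\|_\mcB]$, and nothing later forces the lower endpoint.

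For comparison, the paper's argument is structurally much shorter and does not induct. It fixes a fundamental minimal system $\{x_i\}$ once and for all, defines $\hat x_i^*$ on the one-dimensional span $\langle x_i\rangle$ by $\hat x_i^*(y)=\tfrac{\|x_i\|_\mcB^2}{\|x_i\|_\mcH^2}(y,x_i)_\mcH$, and applies Hahn--Banach with dominating seminorm $p_i(y)=\|x_i\|_\mcB\|y\|_\mcB$ to obtain an extension $x_i^*$ that is automatically a duality map for $x_i$ (so $\|x_i\|_\mcB\|x_i^*\|_{\mcB^*}=1$ comes for free). Biorthogonality $x_i^*(x_j)=0$ for $j\ne i$ is then read off from the asserted inclusion $\hat M_i\subset M_{i,\mcH}^\perp$, i.e.\ that the $x_j$ with $j\ne i$ are $\mcH_2$-orthogonal to $x_i$. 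So where you attempt an inductive selection of vectors compatible with pre-existing norming functionals, the paper instead manufactures each norming functional in one Hahn--Banach step and places the entire burden on an $\mcH_2$-orthogonality claim between $x_i$ and $\hat M_i$; your scheme never invokes Hahn--Banach and never uses that claim.
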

\begin{proof}
Construct $\mcH=\mcH_2$ via Theorem 2.1, so that  $\mcB \subset \mcH$ is a dense continuous embedding and let ${\left\{ {x_i } \right\}_{i=1}^\iy}$ be a fundamental minimal system for $\mcB$.   If $i \in \N$, let $M_{i, \mcH}$ be the closure of the span of ${\left\{ {x_i } \right\}}$ in $\mcH$.  Thus, $x_i \notin M_{i, \mcH}^\bot$, $M_{i,\mcH}  \oplus M_{i,\mcH}^ \bot   = \mcH$ and $\left( {y,x_i } \right)_\mcH = 0$ for all $y \in M_{i, \mcH}^\bot$.  

Let $\hat{M}_{i}$ be the closure of the span of ${\left\{ {x_j } \; j\ne i \right\}}$ in $\mcB$.  Since $\hat{M}_{i} \subset M_{i, \mcH}^\bot$ and $x_i \notin \hat{M}_{i},  \; (y, x_i)_\mcH=0$ for all $y \in \hat{M}_{i}$.  Let the seminorm $p_i(\, \cdot \,)$ be defined on the closure of the span of $\{x_i \}$, in $\mcB$  by $p_i(y) = \left\| {x_i } \right\|_\mcB \left\| y \right\|_\mcB$,
and define $ {\hat x}_i^* (\, \cdot \,)$ by:
\[
 {\hat x}_i^* (y) = \frac{{\left\| x_i \right\|_{\mcB}^2 }}
{{\left\| x_i \right\|_{\mcH}^2 }}\left( {y, x_i} \right)_{\mcH} 
\]
By the Hahn-Banach Theorem, ${\hat x}_i^* (\, \cdot \,)$ has an extension $x_i^* (\, \cdot \,)$ to $\mcB$, such that $\left| {x_i^* (y)} \right|   \leqslant p_i (y)= \left\| x_i \right\|_B \left\| y \right\|_B$ for all $y \in \mcB$. By definition of $p_i(\, \cdot \,)$, we see that $\left\| {x_i^* } \right\|_{\mathcal{B}^*} \le \left\| x_i \right\|_\mathcal{B}$.   On the other hand  $x_i^* (x_i) = \left\| x_i \right\|_\mathcal{B}^2 \leqslant \left\| x_i \right\|_\mathcal{B} \left\| {x_i^* } \right\|_{\mathcal{B}^*}$, 
so that $x_i^* ( \, \cdot \, )$ is a duality mapping for $x_i$.  If $x_i^*(x)=0$ for all $i$, then $x \in \bigcap_{i=1}^\iy{\hat{M}_i} =\{0\}$ so that the family ${\left\{ {x_i^* } \right\}_{i=1}^\iy}$ is total.  If  we let $\left\| x_i \right\|_\mathcal{B}=1$, it is clear that $x_i^*(x_j)= \de_{ij}$, for all $i,j \in \N$. Thus, $\{x_i, x_i^* \}$ is an  M-basis system with $\left\| {x_i } \right\|_{\mcB} \left\| {x_i^* } \right\|_{{\mcB}^*} = 1$ for all $i$.
\end{proof}
\section*{Conclusion}
In this paper we have first shown that every infinite dimensional separable Banach space is very close to a Hilbert space in a well defined manner, providing the best possible improvement on  the well-known theorem of Mazur.  We have then provided a solution to the M-basis problem by showing that every infinite dimensional separable Banach space has a M-basis $\{x_i, x_i^* \}$, with the property that $\left\| {x_i } \right\|_{\mcB} \left\| {x_i^* } \right\|_{{\mcB}^*} = 1$ for all $i$.


\begin{thebibliography}{99}
\small
\bibitem[BA]{BA} S. Banach  {\it Th{\'e}orie des Op{\'e}rations lin{\'e}aires,}  Monografj Matematyczn, Vol. 1,  Warsaw, (1932).
\bibitem[BM]{BM} S. Banach and S. Mazur,  {\it Zur  Theorie der  linearen Dimension,}  Studia Mathematica, {\bf 4} (1933), 100-110.
\bibitem[D]{D} J. Diestel, {\it Sequences and Series in Banach Spaces,} Grad. Texts in Math. Springer-Verlag, New York, (1984). 
\bibitem[DJ]{DJ} W. J. Davis and W. B. Johnson, {\it On the existence of fundamental and total bounded
biorthogonal systems in Banach spaces}, Studia Math. {\bf 45} (1973), 173-179.
\bibitem[EN]{EN} P. Enflo,  {\it A counterexample to the approximation problem in Banach spaces}, Acta Mathematica {\bf 130} (1), (1973) 309Ð317.
\bibitem[FH]{FH} R. P. Feynman and A. R. Hibbs, {\it Quantum Mechanics and Path Integrals}, McGraw-Hill, New York, (1965). 
\bibitem[G]{G} L.  Gross, {\it Abstract Wiener spaces,} Proc. Fifth Berkeley Symposium on  Mathematics\  Statistics and Probability, (1965), 31-42.
\bibitem[GBZS]{GBZS} T.  Gill, S.  Basu,  W. W. Zachary and V. Steadman, { \it Adjoint for operators in Banach spaces}, Proceedings of the American Mathematical Society, { \bf 132} (2004), 1429-1434.
\bibitem[GR]{GR} A. Grothendieck,  {\it Produits tensoriels topologiques et espaces nucleaires}, Memo. Amer. Math. Soc. {\bf 16} (1955).
\bibitem[GZ]{GZ} T. L. Gill and W. W. Zachary, {\it A New Class of Banach Spaces}, Journal of Physics A: Math. and Gen.  {\bf 41} (2008), 495206.
\bibitem[GZ1]{GZ1} T. L. Gill and W. W. Zachary, {\it Banach Spaces for the Feynman integral}, Real Analysis Exchange  {\bf 34}(2) (2008)/(2009), 267-310.
\bibitem[KB]{KB}  J.  Kuelbs, {\it Gaussian measures on a Banach space,} Journal of Functional Analysis {\bf 5} (1970), 354-367.
\bibitem[L]{L} P.  D.  Lax, Symmetrizable linear tranformations. \emph{Comm. Pure Appl. Math.} {\bf 7} (1954), 633--647.
\bibitem[M]{M}  A. I.  Markusevich, {\it On a basis in the wide sense for linear spaces,} Dokl. Akad. Nauk. SSSR,  {\bf 41},  (1943), 241-244. 
\bibitem[MA]{MA}  A. I.  Markusevich, {\it On a basis in the wide sense for linear spaces,} Dokl. Akad. Nauk. SSSR,  {\bf 41},  (1943), 241-244. 
\bibitem[OP]{OP} R. I.Ovsepian and A. Pelczy\'{n}iski, {\it The existence in separable Banach space of 
fundamental total and bounded biorthogonal sequence and related constructions of uniformly 
bounded orthonormal systems in $L^2$}, Studia Math. {\bf 54} (1975), 149-159.
\bibitem[PE]{PE}  A.  Pelczy\'{n}iski, {\it All separable  Banach spaces admit for every $\e>0$ fundamental total and bounded by $1+ \e$ biorthogonal sequences,} Studia Math., {\bf 55},  (1976), 295-304. 
\bibitem[PE1]{PE1}  A.  Pelczy\'{n}iski, {\it A note on the paper of Singer  ``Basic sequences and reflexivity of Banach spaces",} Studia Math., {\bf 21},  (1962), 371-374. 
\bibitem[PL]{PL}  A. N. Plichko, {\it M-bases in separable and refexive Banach spaces}, Ukrain. Mat. áZ. {\bf 29}
(1977), 681-685.
\bibitem[RU]{RU} W. Rudin, {\it Functional Analysis}, McGraw-Hill Press, New York, (1973).
\bibitem[SI]{SI}  I. Singer, {\it On biorthogonal systems and total sequences of functionals II}, Math. Ann.
{\bf 201}  (1973), 1-8.
\end{thebibliography}
\end{document}